\newtheorem{theorem}{Theorem}[section]
\newtheorem{lemma}[theorem]{Lemma}
\newtheorem{definition}[theorem]{Definition}
\newtheorem{proposition}[theorem]{Proposition}
\newtheorem{remark}[theorem]{Remark}
\newtheorem{example}[theorem]{Example}
\numberwithin{equation}{section}
\newcommand{\average}{{\mathchoice {\kern1ex\vcenter{\hrule
height.4pt width 8pt depth0pt}
\kern-11pt} {\kern1ex\vcenter{\hrule height.4pt width 4.3pt
depth0pt} \kern-7pt} {} {} }}
\newcommand{\weakst}{\stackrel{\ast}{\rightharpoonup}}
\newcommand{\N}          {\mathbb{N}}
\newcommand{\R}         {\mathbb{R}}
\def\debaixodasetafraca#1#2{\mathrel{}\mathop{\rightharpoonup}\limits^{#1}_{#2}}
\begin{document}

\begin{center}
  \LARGE
Differential Inclusions and $\mathcal A$-quasiconvexity
\\[3mm]
\normalsize
Ana Cristina Barroso$^\dag$,
Jos\'e Matias$^\ddag$ and Pedro Miguel Santos$^\ddag$
\\[3mm]
{\em 
$\dag$
Faculdade de Ci\^encias da Universidade de Lisboa\\
Departamento de Matem\'atica and CMAF-CIO\\
Campo Grande, Edif\'\i cio C6, Piso 1, 1749-016 Lisboa, Portugal\\
$\ddag$ Departamento de Matem\'atica and CAMGSD\\
Instituto Superior T\'ecnico\\
Av. Rovisco Pais,
1049-001 Lisboa,
Portugal\\
[3mm]}
 \today\\[3mm]
\end{center}

\medskip

\begin{abstract}
In this paper we extend to the abstract $\cal A$-framework some existence theorems for differential inclusion problems 
with Dirichlet boundary conditions.
\end{abstract}

\section{Introduction}

There is an extensive literature on differential inclusions to treat problems of the type
\begin{equation}\label{gradiente}
\left\{\begin{array}{ll} Du(x) \in E, \,\,\,\,\,\text{for a.e.}\,x \in \Omega\\
u(x)=\varphi(x),\,\,\text{for}\,x\in \partial \Omega\\
\end{array}
\right.
\end{equation}
where $\Omega \subset \R^n$ is a bounded, open set, $E \subset \R^{m.n}$ is a bounded set and 
$\varphi \in W^{1,\infty}(\Omega;\R^m)$. 
We refer, in particular, {\it the method of convex integration} introduced by Gromov \cite{Gr_73}, which was applied by 
M\"{u}ller and Sverak  \cite{Mu_Sv_96} to solve the problem of two potential wells in two dimensions, and also 
{\it the Baire category method} introduced by Cellina \cite{Ce_80}, which we use in this paper, following essentially the 
framework considered by Dacorogna and Marcellini in \cite{Dac_Mar_99}.

The objective of this work is to generalise the above problem to the $\cal A$-quasiconvexity framework, that is, 
we replace the condition $$Du(x) \in E$$ by the more general condition $$v(x) \in E,\qquad {\cal A}v=0,$$
where $\cal A$ is a first order linear partial differential operator (see the next section for details). 
In particular if $\cal A={\rm curl}$ we recover the gradient case (\ref{gradiente}).

Following ideas from \cite{Dac_Mar_99}, we consider sets $E$ of the form
$$E=\{ \xi \in \R^d: F_i (\xi)=0, i=1,..,I \},$$
where $F_i$, $i=1,..,I$ are continuous, $\cal A$-quasiconvex functions. The lower semicontinuity of integrals of 
$\cal A$-quasiconvex functions with respect to the $L^{\infty}$-weak star convergence of $\cal A$-free sequences 
plays a central role in the verification of the Baire theorem hypothesis, which we need in order to obtain our 
existence result.
In Dacorogna and Pisante \cite{Dac_Pis_05} more general sets $E$ were considered, thus removing the hypothesis of being 
the intersection of the sets of zeros of quasiconvex functions. In the proof they use an idea of Kirchheim \cite{K_15} 
about the existence of a dense set of continuity points for the operator $D$ (see \cite{Dac_Pis_05} for details).
However, it is not clear how to generalise that proof to the $\cal A$-quasiconvex case.

In this paper we start with an abstract existence result, under constant boundary conditions (see Theorem \ref{aet}). 
This result is of very difficult application, since it depends on the verification of a property called the relaxation 
property (see Definition \ref{repo}). 
We then identify some particular cases where the relaxation property can be verified, and we obtain an existence theorem,
which is a particular case of Theorem \ref{aet}. This is the main result of this paper, and it was obtained for the one 
level set case (see Theorem \ref{di1}), that is, 
$$E = \{ \xi \in \R^d: F(\xi)=0 \},$$
with $F$ $\cal A$-quasiconvex and coercive. We focus mainly on the 
two-dimensional case (although extensions to higher dimensions are discussed in Remark \ref{3D}), and we impose some 
restrictions on the operator $\cal A$, namely a constant (maximum) rank condition, conditions on the dimensions and the 
kernel of the matrices $A^{(1)}$ and $A^{(2)}$ (see the following sections for notation and details). Some of these 
conditions can be relaxed using essentially the same proofs.

In the case of several level sets, that is, 
$$E=\{ \xi \in \R^d: F_i (\xi)=0, i=1,..,I \},$$
where $F_i$, $i=1,..,I$, are continuous, $\cal A$-quasiconvex functions, we obtain a sufficient condition for the 
laminate convex hull $K=\Lambda{\rm co}E$ to have the relaxation property with respect to $E$ (see Theorem \ref{aproxprop}). 
As an application, we obtain a second existence theorem (see Theorem \ref{system}), which is quite general, in the sense that 
it includes several level sets and no restrictions on the operator $\cal A$ are made, but it requires the laminate convex 
hull of $E$ to be compact and strongly star shaped. 

In both existence results described above we use the laminate convex hull, that is, the set $\Lambda{\rm co}E$, 
which is easier to obtain then the closed $\cal A$-quasiconvex hull, the natural set for this kind of problems. 
In particular, we prove that $\Lambda{\rm co}E$ can be obtained departing from $E$ by successively adding segments in 
the directions of the characteristic cone (see Proposition \ref{induction} for details), and this is used in the proof 
of Theorem \ref{aproxprop}. 

The outline of this work is as follows: in Section 2 we fix the notation and recall some basic notions related to 
$\cal A$-quasiconvexity, in Section 3 we state the problem and present our results.

\section{Preliminaries}
\subsection{Notation}

The following notation will be used throughout:
\begin{itemize}
\item[-] $\Omega$ will denote a bounded, open subset of $\R^N$;
\item[-] $Q$ denotes the unit cube of $\R^N$;
\item[-] $\mathcal{L}^N$ denotes the $N$-dimensional Lebesgue measure;
\item[-] we write a.e. in $\Omega$ meaning $\mathcal{L}^N$ a.e. in $\Omega$;
\item[-] given $K \subset \R^{d}$, we denote by $\text{int}\,K$ its interior, which is understood to be non-empty;
\item [-] given $E \subset \R^{d}, \; \text{dist}(. ; E)\,$ stands for the euclidean distance to $E$ in $\R^d$;
\item [-] ${\mathcal D}'(\Omega; \R^d)$ denotes the space of distributions in $\Omega$ with values in $\R^d$;
\item [-] $L_{\rm piec}^\infty(\R^N; \R^d)$ denotes the set of piecewise constant and bounded functions from $\R^N$ to 
$\R^d$;
\item [-] given a differential operator $\cal A$, we denote by 
${\cal N({\cal A})} := \{u: { \cal A} u = 0\; \text{in} \; {\mathcal D}'\}$.
\end{itemize} 

\subsection{{$\mathcal A$}-quasiconvexity}

Here we deal with operators $\mathcal{A} : {\mathcal D}'(\Omega; \R^d) \to {\mathcal D}'(\Omega; \R^m)$ of the form
$${\cal A}u := \sum_{i=1}^N A^{(i)} \frac {\partial u} {\partial x_i}$$
where $A^{(i)}$, $i=1,...,N$, are $m \times d$ matrices. Operators like {\it curl} or {\it div} have the above form, 
precisely,
\begin{enumerate}
\item  (${\cal A}={\rm div}$) for $u \in {\mathcal D}'(\Omega; \R^d)$, with $d=N$, 
$${\cal A} u = \sum_{i=1}^N \frac {\partial u^i} {\partial x_i};$$
\item (${\cal A}={\rm curl}$) for  $u \in {\mathcal D}'(\Omega; \R^d)$, with $d=m \times N$, 
$${\cal A} u = {\left( \frac {\partial u_k^j} {\partial x_i} -  
\frac{\partial u_i^j}{\partial x_k} \right)}_{j=1,..,m; i,k=1,..,N};$$
\item (Maxwell's Equations)  for  $u \in {\mathcal D}'(\R^3; \R^{3 \times 3})$, 
$$ {\cal A} u = \left( {\rm div} (m+h), {\rm curl} \,h \right),$$
where $u = (m,h).$
\end{enumerate}

We present some definitions related to the theory of $\cal A$-quasiconvexity which will be useful throughout the paper.

\begin{definition} ($\cal A$-quasiconvex function)
Let $F:\R^d \to \R$ be a Borel measurable function. $F$ is $\cal A$-quasiconvex if and only if the inequality below holds
$$F(\xi) \leq \int_Q F(\xi +w(x))\,dx$$
for every $w \in C^{\infty}(\R^N;\R^d)$, $Q$-periodic, verifying the condition ${\cal A} w=0$ in $\R^N$.
\end{definition}

\begin{definition}\label{cone} 
The characteristic cone of ${\cal A}$ is defined by 
$$\Lambda=\left\{v\in \R^d: \exists w\in \R^N\setminus \{0\},\,\, \left(\sum_{i=1}^{N} A^{(i)} w_i\right)v=0 \right\}.$$
\end{definition}

\smallskip
 
\noindent  For each  $\lambda \in \Lambda$ we define the subspace of $\R^N$
$$\mathcal{V}_\lambda := \left\{ \omega \in \R^N:  \left(\sum_{i=1}^{N} A^{(i)} w_i\right)\lambda=0 \right\},$$
which will be useful in the sequel.
 
\begin{remark} \label{nopenazsaltos}
{\rm We note that if $\lambda \in \Lambda$  and $u(x) = \lambda v(x\cdot w^1, x\cdot w^2,\ldots, x\cdot w^l)$, where 
$w^1,\ldots,w^l$ is an orthonormal basis for $\mathcal{V}_\lambda$, then 
$${\cal A}u=0 \; \text{in}\; \mathcal{D}'(\R^N; \R^m).$$
In particular, the equation does not penalise jumps of $\lambda$ (or multiples) in the directions of 
$\mathcal{V}_{\lambda}$.
In fact, let $a, b \in \R^d$ with $b-a \in \Lambda$ and $\nu \in \mathcal{V}_{b-a}$ and consider $u: \R^N \to \R^d$ 
defined by
$$ u (x) = \left\{ \begin{array}{ll} b &\text{if} \; \; x \cdot \nu > 0\\
&\\
a & \text{if} \; \;  x \cdot \nu < 0.
\end{array}
\right.
$$
Let $\phi \in C^\infty_c ( \R^N; \R^m)$ and denote by $S: = \{ x \in \R^N : x \cdot \nu = 0\}.$ Then, 
\begin{eqnarray*} 
\langle \mathcal{A} u, \phi \rangle &=& 
\left\langle \sum_{i=1}^N  A^{(i)} \frac{\partial u}{\partial x_i}, \phi \right \rangle = - \sum_{i=1}^N  \left \langle A^{(i)} u, \frac{\partial \phi}{ \partial x_i} \right \rangle \\\\
&&  = - \sum_{i=1}^N  \left( \int_{\R^N \setminus S}  \frac{\partial (A^{(i)} u \cdot \phi)}{\partial x_i}\, dx 
- \int_{\R^N \setminus S}  \frac{\partial (A^{(i)} u)}{\partial x_i}\phi \, dx\right) 
= - \sum_{i=1}^N  \int_{\R^N \setminus S}  \frac{\partial( A^{(i)} u \cdot \phi)}{\partial x_i}\, dx.
\end{eqnarray*}
Therefore, 
$$  \langle \mathcal{A} u, \phi \rangle = - \sum_{i=1}^N  \int_{S} A^{(i)}(b-a)\phi \nu^i \, dx = 0,$$
\noindent since
$$ \left( \sum_{i=1}^N A^{(i)} \nu^i \right)(b-a)= 0. $$}
\end{remark}

\begin{example} 
{\rm An explicit characterisation of the characteristic cone of ${\mathcal A}$ is provided in the following examples.
\begin{enumerate}
\item  If ${\cal A}={\rm div}$, then ${\Lambda}=\R^d$ and ${\mathcal V}_\lambda$ is the orthogonal complement of $\lambda$.
\item If ${\cal A}={\rm curl}$, then 
$$\Lambda= \left\{ \lambda \in \R^{m \times N}: \lambda_i^j = a_j \nu_i \,\, \text{for some} \; a \in \R^m \;
\text{and} \; \nu \in \R^N \right\}.$$
\item If ${\cal A}$ is the operator for Maxwell's Equations, one has
$$\Lambda= \left\{ \lambda=(m,h) \in \R^{3 \times 3}: m = a- h, \text{ for some} \; a \in \R^3 \; \text{such that} 
\; a\cdot h=0 \right\}.$$
\end{enumerate} }
\end{example}

\section{Differential Inclusions in the ${\cal A}$-framework}

Let
$\mathcal{A} : {\mathcal D}'(\Omega; \R^d) \to {\mathcal D}'(\Omega; \R^m)$ be a first order linear differential 
operator of constant maximum rank, that is 
$${\rm rank} \left( \sum_{i=1}^N A^{(i)} w_i  \right) = m,$$
for every $w \neq 0$. Let $E \subset \R^d$, $\xi \in \R^d$. 
We look for necessary and sufficient conditions (over $E$) such that the following problem (P) attains solutions:
\begin{equation*}
(P) \; \; \left\{\begin{array}{ll}
u(x) \in E \,\,\; {\rm for}\,  {\rm a.e.}\; x \in \Omega,\\\\
u \in L^\infty(\R^N; \R^d), \;  {\mathcal A}u = 0 \; ( {\rm in}\; {\mathcal D}')\\\\
u = \xi \; {\rm on}\; \R^N \setminus \Omega.\\
\end{array}
\right.
\end{equation*}

\begin{definition}\label{repo} (Relaxation Property)
Let $K, E \subset \R^d$. We say that $K$ has the relaxation property with respect to $E$ if
\begin{eqnarray*} 
\forall \, \Omega, \forall \, \xi \in {\rm int} K & \exists \{u_n\} 
\subset L_{\rm piec}^\infty(\R^N; \R^d),& \; u_n \debaixodasetafraca{*}{} \xi, \; u_n = \xi \; {\rm in}\; 
\R^N\backslash \Omega,\\\\
&& \mathcal{A}u_n = 0 \; {\rm in}\; {\mathcal D}'(\R^N;\R^m)\\\\
&& u_n(x) \in E \cup {\rm int} K\; {\rm a.e.} \; x \in \Omega,\\\\
&&\int_\Omega {\rm dist}(u_n(x); E)\; dx \to 0, \; ( n \to \infty).
\end{eqnarray*}
\end{definition}

\begin{remark}\label{Rbound}
{\rm In fact it is enough to verify the above definition for a cube, as for a general open set $\Omega$ we can use 
Vitali's covering theorem. 

Indeed, suppose that the relaxation property holds for some particular cube $Q$, denote by $u_n$ the corresponding 
sequence and let $R:= {\rm sup}_n {||u_n||}_{\infty}$. Let $\Omega$ be an arbitrary open, bounded set and consider 
a disjoint covering of $\Omega$ with closed cubes $Q^j$, $j \in \N$, with side length small enough such that 
$Q^j \subset \Omega$ for every $j \in \N$ and ${\cal L}^N (\Omega \setminus \cup_j Q^j)=0.$  
As the weak star topology of $L^\infty$ is metrisable in bounded sets we can consider a distance, denoted by 
$d_{\star}^R$. 

The sequence $u_n$ can be rescaled for each cube $Q^j$, and we denote by $u_n^j$ this rescaled sequence. Now we 
construct a new sequence $v_m$ defined by
$$ v_m= \left\{ \begin{array}{ll} u_{n_m}^j &{\rm if}\; x \in Q^j, j=1,..,m\\
&\\
\xi & {\rm otherwise\,\, in}\,\,\R^N,\\
\end{array}
\right.
$$
where $n_m$ is a subsequence of $n$ such that the two conditions below hold
\begin{equation*}
d_{\star}^R ( u_{n_m}^j; \xi) < \frac1{m^2},\,\,\,\,\int_{Q^j} {\rm dist} (u_{n_m}^j; E) \,dx < \frac1 {m^2},
\end{equation*}
for every $j=1,..,m$. Clearly the sequence $v_m$ verifies all the conditions in the definition of the relaxation property 
for the set $\Omega$. We note also that $v_m=\xi$ in a neighbourhood of $\partial \Omega$ (with``size" changing obviously
with $m$), this observation will be useful in the proof of Theorem \ref{aet} below.  }
\end{remark}

We start with an abstract existence theorem.

\begin{theorem}\label{aet}
Let $\Omega \subset \R^N$ be an open set and let $F : \R^d \to \R$ be a continuous and $\cal A$-quasiconvex function. 
Let $E,K \subset \R^d$ be such that
\begin{eqnarray*}
E = \{ \xi \in \R^d: F(\xi)=0 \},\,\,\,\, K \subset \{ \xi \in \R^d: F(\xi) \leq 0 \}.
\end{eqnarray*}
Assume that $K$ has the relaxation property with respect to $E$, and assume that $E$ and $K$ are both bounded.
Then if $\xi \in E \cup int K$ there exists  (a dense set of)  $u \in L^{\infty} (\R^N;\R^d) \cap {\cal N({\cal A})}$ 
such that
$$F(u(x))=0 \,\,{\text {for a.e.}}\,\,x \in \Omega; \,\,u=\xi\,\,{\text in}\,\, \R^N \setminus \Omega.$$
\end{theorem}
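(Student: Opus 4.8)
The plan is to apply the Baire category theorem in a suitable complete metric space built from the relaxation property. First I would fix $\xi \in E \cup \operatorname{int}K$; since the case $\xi \in E$ (then take $u \equiv \xi$) is trivial, assume $\xi \in \operatorname{int}K$. I would introduce the set
$$
\mathcal{U} := \left\{ u \in L^\infty(\R^N;\R^d)\cap\mathcal{N}(\mathcal{A}) : u = \xi \text{ in } \R^N\setminus\Omega,\ \|u\|_\infty \le R,\ u(x)\in \operatorname{int}K \text{ a.e. in }\Omega \right\}
$$
for a radius $R$ coming from the bound on $K$ and from Remark \ref{Rbound}, and then take $X$ to be the closure of $\mathcal{U}$ in the metric $d_\star^R$ induced by the $L^\infty$ weak-$\ast$ topology on the $R$-ball (this ball is metrisable). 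Because $\mathcal{A}u=0$ is preserved under weak-$\ast$ limits and $\|u\|_\infty\le R$ is a closed constraint, $X$ is a complete metric space whose elements still satisfy $\mathcal{A}u=0$, $u=\xi$ off $\Omega$, and $F(u)\le 0$ a.e.\ in $\Omega$ (using continuity of $F$, $\mathcal{A}$-quasiconvexity giving weak-$\ast$ lower semicontinuity of $u\mapsto\int_\Omega F(u)$, and $K\subset\{F\le 0\}$, so that $\int_\Omega F(u)\le 0$, whence $F(u)\le 0$ pointwise a.e.\ after noting that in fact each element of $X$ takes values in $\overline{K}$).

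Next I would define, for $k\in\N$, the sets
$$
\mathcal{O}_k := \left\{ u \in X : \int_\Omega \operatorname{dist}(u(x);E)\,dx < \tfrac{1}{k} \right\}.
$$
Each $\mathcal{O}_k$ is open in $X$: this follows from the weak-$\ast$ lower semicontinuity of $u\mapsto\int_\Omega \operatorname{dist}(u(x);E)\,dx$, which itself is an instance of $\mathcal{A}$-quasiconvex lower semicontinuity applied to the convex (hence $\mathcal{A}$-quasiconvex) coercive function $\operatorname{dist}(\cdot;E)$ restricted to the bounded range $\overline{K}$; equivalently, its complement is closed. The heart of the argument is to show that each $\mathcal{O}_k$ is dense in $X$, and this is precisely where the relaxation property enters. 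Given $v\in X$ and $\eta>0$, I would first approximate $v$ in $d_\star^R$ by a piecewise constant map $v_0$ taking values in $\operatorname{int}K$ with $v_0=\xi$ near $\partial\Omega$ and $\mathcal{A}v_0=0$ (this is the standard density of $\mathcal{A}$-free laminates, combined with Remark \ref{Rbound}); then on each constant piece $\{v_0=\xi_j\}\subset\subset\Omega$, with $\xi_j\in\operatorname{int}K$, I would invoke the relaxation property of $K$ with respect to $E$ on that sub-cube (or sub-domain, via Vitali as in Remark \ref{Rbound}) to replace $\xi_j$ by a sequence converging weak-$\ast$ to $\xi_j$, staying in $E\cup\operatorname{int}K$, matching the boundary value $\xi_j$, and driving $\int \operatorname{dist}(\cdot;E)$ to $0$. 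Gluing these pieces and using the metrisability to control the total perturbation yields an element of $\mathcal{O}_k$ within $\eta$ of $v$; the fact that the relaxation sequences agree with $\xi_j$ near the boundary of their pieces ensures the glued map is still in $\mathcal{N}(\mathcal{A})$ and in $X$.

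By Baire's theorem, $\bigcap_{k}\mathcal{O}_k$ is dense (in particular nonempty and dense) in $X$. Any $u$ in this intersection satisfies $\int_\Omega \operatorname{dist}(u(x);E)\,dx = 0$, hence $u(x)\in\overline{E}=E$ a.e.\ (here $E=\{F=0\}$ is closed by continuity of $F$), together with $\mathcal{A}u=0$ and $u=\xi$ off $\Omega$; thus $F(u(x))=0$ a.e.\ in $\Omega$, and the set of such $u$ is dense in $X$, giving the ``dense set'' conclusion. The main obstacle I anticipate is the density step: one must carefully patch the locally-supplied relaxation sequences across infinitely many sub-cubes of a general $\Omega$ while keeping the $\mathcal{A}$-free constraint globally and controlling both $d_\star^R$ and the total $\operatorname{dist}(\cdot;E)$-mass; the observation in Remark \ref{Rbound} that the relaxation sequence can be taken equal to $\xi$ in a neighbourhood of the boundary of each piece is exactly what makes this gluing legitimate, and verifying that the metric-space framework (completeness of $X$, openness and density of the $\mathcal{O}_k$) is internally consistent is the remaining technical work.
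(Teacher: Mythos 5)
Your overall Baire-category skeleton (a complete metric space of admissible $\mathcal{A}$-free fields in the weak-$*$ metric, a countable family of open dense sets, relaxation property for density) is the same as the paper's, but the choice of the sets $\mathcal{O}_k=\bigl\{u: \int_\Omega\mathrm{dist}(u(x);E)\,dx<\tfrac1k\bigr\}$ contains a genuine error that breaks the argument. For $\mathcal{O}_k$ to be open you need its complement $\bigl\{u:\int_\Omega\mathrm{dist}(u;E)\,dx\ge\tfrac1k\bigr\}$ to be sequentially weak-$*$ closed, i.e.\ you need the functional $J(u):=\int_\Omega\mathrm{dist}(u(x);E)\,dx$ to be \emph{upper} semicontinuous along weak-$*$ convergent $\mathcal{A}$-free sequences. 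The lower semicontinuity you invoke goes the wrong way: it makes $\{J\le c\}$ closed and hence $\{J>c\}$ open, not $\{J<c\}$ open. Upper semicontinuity of $J$ is not available from the hypotheses (it would essentially require $\mathrm{dist}(\cdot;E)$ to be $\mathcal{A}$-quasiconcave on the relevant range), and the parenthetical justification is also wrong on its own terms, since $\mathrm{dist}(\cdot;E)$ is not convex when $E$ is not convex (e.g.\ $E$ a sphere, the model case $F(\xi)=|\xi|^2-1$). This is exactly the point where the paper's choice differs: it takes $V_\varepsilon=\{u:\int_\Omega F(u)\,dx>-\varepsilon\}$, whose complement $\{\int_\Omega F(u)\,dx\le-\varepsilon\}$ is closed precisely by the Fonseca--M\"uller lower semicontinuity theorem for $\mathcal{A}$-quasiconvex integrands, and then uses the sign structure ($F\le0$ on $K$, $F=0$ on $E$, uniform continuity of $F$ near $E$) to pass from $\int_\Omega\mathrm{dist}(u_n;E)\,dx\to0$ to $\int_\Omega F(u_n)\,dx>-\varepsilon$ in the density step. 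Note that in your version the $\mathcal{A}$-quasiconvexity of $F$ is never actually used anywhere, which is a warning sign that the hypothesis doing the real work has been dropped.

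A second gap: you take $X$ to be the weak-$*$ closure of a class of \emph{general} $L^\infty$ $\mathcal{A}$-free fields valued in $\mathrm{int}\,K$, and your density step then requires approximating an arbitrary $v\in X$ by a piecewise constant $\mathcal{A}$-free map with the same boundary datum and values in $\mathrm{int}\,K$. You call this ``standard density of $\mathcal{A}$-free laminates'', but no such approximation result is available here, and it is a nontrivial claim in the general $\mathcal{A}$-free setting. The paper sidesteps this entirely by building $V$ out of \emph{piecewise constant} $\mathcal{A}$-free maps from the outset, so that density of $V_\varepsilon$ in $\overline{V}$ only needs to be verified against elements of the automatically dense subset $V$, where the relaxation property applies piece by piece. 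Your discussion of the gluing itself (relaxation sequences equal to the constant value near the boundary of each piece, Vitali covering as in Remark \ref{Rbound}) is correct and matches the paper, as does the observation that the case $\xi\in E$ is trivial; but the two issues above — the direction of semicontinuity needed for openness, and the unjustified piecewise-constant approximation — must both be repaired for the proof to go through.
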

\begin{proof}
The proof follows ideas of \cite{Dac_Mar_99}. 

Let
$$V= \{ u \in L^{\infty}_{\rm piec} (\R^N;\R^d): \mathcal A u=0,\,  u=\xi \, 
\text{in} \, \R^N \setminus \Omega, u(x) \in E \,\cup\, {\rm int} K \}.$$
Note that $V$ is nonempty since $u\equiv \xi \in$ V.

Let ${\overline V}$ be the closure of $V$ with respect to the weak* topology in $L^{\infty}$, note that as $E$ and $K$ 
are bounded we can find a metric for the weak* topology, which will be denoted in the sequel by $d$. We introduce the set
$$V_{\varepsilon} = \left\{ u \in {\overline V}: \int_{\Omega} F(u(x))\; dx  > - \varepsilon \right\}.$$

If $V_{\varepsilon}$ is open and dense in ${\overline V}$, then by  Baire's category theorem 
$\cap_{\varepsilon >0} V_{\varepsilon}$ is nonempty, and thus the statement of the theorem holds.

The fact that $V_{\varepsilon}$ is open in ${\overline V}$ (or equivalently that ${\overline V}\backslash V_{\varepsilon}$ 
is closed in ${\overline V}$) follows immediately from Theorem 6.3 in \cite{Fon_Mul_99} which ensures that, if $F$ is
continuous and $\mathcal A$-quasiconvex, then
$$ \{u_n\} \subset L^\infty , \; u_n \debaixodasetafraca{*}{} u, \; \; \mathcal{A}(u_n) = 0 \;  
\Rightarrow \liminf \int_\Omega F(u_n(x))\, dx \geq \int_\Omega F(u(x))\, dx.$$

Now we prove the density of $V_{\varepsilon}$.
Let $u \in V$ and $\delta > 0$. We want to find an element $u_\delta \in V_\varepsilon$ such that 
$$d (u,u_\delta) < \delta.$$
As $u$ is piecewise constant in $\Omega$  we can find open sets $A_i$ such that $\cup_{i=1}A_i = \Omega$, 
${\cal L}^N \left(\Omega \setminus \cup_{i=1}A_i\right)=0$ and
$u(x)=\xi_i$ for $x \in A_i$. Suppose that $\{ i: \xi_i \in \text{int} K\} \neq \emptyset $ 
(if $ \forall i, \; \xi_i \in E $ there is nothing to prove). 
Then as $K$ has the relaxation property with respect to $E$, we can find a sequence $u_n^{\xi_i}$ of 
piecewise constant functions defined in $\R^N$ such that ${\cal A}u_n^{\xi_i} =0$, 
$u_n^{\xi_i} (x) \in E \cup {\rm int} K$, 
$u_n^{\xi_i} \debaixodasetafraca{*}{} \xi_i$, $u_n^{\xi_i}=\xi_i$ in $\R^N \setminus A_i$  and 
$$\int_{A_i} {\rm dist}(u_n^{\xi_i};E) \,dx \to 0.$$
Define the sequence $u_n(x)=u_n^{\xi_i}$ for $x \in A_i$ and $u_n(x) = \xi$ in $\R^N \setminus \Omega$.
It is clear from Remark \ref{Rbound} that the sequences coming from the relaxation property can be constructed in such 
a way that they are constant in a neighbourhood of each set $A_i$, and hence we have ${\cal A}u_n=0$. 
Thus $u_n \in V$, for all $n$, and $u_n \weakst u$.

Let $\displaystyle 0 < \varepsilon'< \frac{\varepsilon}{2\mathcal{L}^N(\Omega)}$. 
Since $F$ is uniformly continuous in $\overline{K}$ and $F=0$ on $E$, we can find 
$\delta = \delta(\varepsilon')$ such that
$$ {\rm dist}(u_n;E) \leq \delta \Rightarrow  F(u_n(x)) \geq -\varepsilon'.$$
As $\displaystyle \int_{A_i} {\rm dist}(u_n^{\xi_i};E) \,dx \to 0$ we conclude that 
$\displaystyle \int_\Omega {\rm dist}(u_n;E) \, dx \to 0.$ This strong convergence in $L^1$ yields, by
equi-integrability, $\displaystyle \lim_{n} \mathcal{L}^N\{ x \in \Omega: {\rm dist}(u_n(x);E) > \delta\} = 0.$
Since 
$$A_{n,\varepsilon'}:= \{ x \in \Omega: F(u_n(x)) < -\varepsilon'\} \subset 
\{x \in \Omega: {\rm dist}(u_n;E) >  \delta\},$$ 
and as $u_n \weakst u$  and hence is uniformly bounded in $L^\infty$,
we have by the continuity of $F$,
\begin{eqnarray*} 
\int_{\Omega} F(u_n(x))\, dx &=& \sum_{i}\int_{A_i \cap A_{n,\varepsilon}}F(u_n(x))\, dx +
\sum_{i}\int_{A_i \setminus A_{n,\varepsilon'}}F(u_n(x))\, dx \\
&\geq& -C \mathcal{L}^N(A_{n,\varepsilon'}) - \varepsilon' \mathcal{L}^N(\Omega)\\
&\geq& -\frac{\varepsilon}{2} -\frac{\varepsilon}{2} = -\varepsilon,
\end{eqnarray*}
for $n$ large enough. Hence $u_n \in V_{\varepsilon}$.
Moreover, as $u_n \weakst u$, we have 
$$d(u;u_n) < \delta,$$
also for $n$ large enough.
\end{proof}

\begin{remark} \label{Elimitado} 
{\rm (Possible extensions of the existence theorem)

\noindent 1) The above theorem also holds for sets $E$ of the form
$$E= \{ \xi \in \R^d: F_i (\xi)=0, i=1,...,I \}$$
where $F_i$, $i=1,..,I$, are continuous, $\cal A$-quasiconvex functions. The proof is similar to the case of a single function.

\noindent 2) Piecewise constant boundary conditions. }
\end{remark}

The main difficulty to apply the above theorem is the verification of the relaxation property. In the following lemma we 
present a (laminate) construction that will be key in establishing an existence result for the case of one level set 
($I=1$ in Remark \ref{Elimitado}) under coercivity conditions on $F$ in at least one direction of the characteristic 
cone (see Theorem \ref{di1}).

\begin{lemma}\label{lemma1} Let $N=2$, $d = 2m$, and suppose that 
${\cal N} \left( A^{(1)} \right) \cap {\cal N} \left( A^{(2)} \right)= \{ 0 \}$.
Let $a,b \in \R^d$ be such that $b-a$  belongs to the {\it characteristic cone} and $$\lambda a + (1-\lambda)b=0.$$ 
Then we can find a sequence of piecewise constant functions  ${\tilde u}_n: \R^2 \to \R^d$ such that 
${\tilde u}_n \debaixodasetafraca{*}{} 0$ and
$$\mathcal{L}^N\left( \{ x \in Q: u_n(x)=a \} \right) \to \lambda,\,\,\,\,\mathcal{L}^N\left( \{ x \in Q: u_n(x)=b \} \right) \to 1-\lambda,$$
${\cal A}{\tilde u}_n=0$, ${\tilde u}_n = 0$ in $\R^2 \setminus Q$. Moreover, given $\varepsilon >0$, we have for $n$ 
large enough $${\rm Range} (u_n) \setminus \{a,b\} \subset B_{\varepsilon}(0).$$
\end{lemma}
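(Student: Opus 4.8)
The plan is to build ${\tilde u}_n$ by a classical one-dimensional laminate construction along a single characteristic direction, then superimpose increasingly fine oscillations so that the weak-$*$ limit is $0$ while the range collapses onto $\{a,b\}$ up to a small ball. First I would set $\lambda = \mathcal{L}^N(\{u_n = a\})$-target, write $\mu = b-a \in \Lambda$, and pick $\nu \in \mathcal{V}_\mu$ a unit vector (nonempty since $\mu \in \Lambda$); by Remark \ref{nopenazsaltos}, any function of the form $x \mapsto g(x\cdot \nu)$ with values on the segment $[a,b]$ and jumps only in direction $\mu$ (hence multiples of $b-a$) is $\mathcal{A}$-free. The basic building block is the periodic two-valued function taking value $b$ on a slab of relative width $\lambda$ and value $a$ on the complementary slab of width $1-\lambda$, oriented by $\nu$; its mean is $\lambda a + (1-\lambda)b = 0$, so on fine scales it converges weak-$*$ to $0$. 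To localise this to the cube $Q$ and make it piecewise constant on $\R^2$ with value $0$ outside $Q$, I would introduce a thin transition layer near $\partial Q$ where the function interpolates through finitely many constant values on the segment $[a,b]$ — these extra values all lie in $B_\varepsilon(0)$ once the layer is made thin and the mesh fine, which is exactly the "Moreover" clause; this cutoff costs only a boundary layer of vanishing measure, so it does not affect the volume fractions $\lambda$, $1-\lambda$ in the limit, nor the weak-$*$ limit $0$.

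The subtle point is that one cannot simply "cut off" an $\mathcal{A}$-free oscillation and stay $\mathcal{A}$-free: multiplying by a scalar cutoff destroys $\mathcal{A}u=0$. This is where the hypotheses $N=2$, $d=2m$, and ${\cal N}(A^{(1)}) \cap {\cal N}(A^{(2)}) = \{0\}$ enter. Since the operator has constant maximum rank $m$ and $d = 2m$, the matrix $\sum_i A^{(i)}w_i$ has a kernel of dimension exactly $m$ for each $w\ne0$, so the wave cone is governed by a genuinely two-dimensional pencil of matrices; the condition ${\cal N}(A^{(1)}) \cap {\cal N}(A^{(2)}) = \{0\}$ guarantees that the single direction $\nu$ can be chosen and that the construction does not degenerate (no nonzero vector is annihilated by the whole operator). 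I would use these to argue that the boundary transition can be carried out by chaining laminates in the same family $\mathcal{V}_\mu$ — i.e. a "second-order" lamination whose directions still lie in the characteristic cone — so that $\mathcal{A}{\tilde u}_n = 0$ is preserved at every stage, invoking Remark \ref{nopenazsaltos} repeatedly at each interface. Concretely, near a face of $Q$ perpendicular to a direction transversal to $\nu$, I replace the sharp cutoff by a finite sequence of parallel jumps between values $a, \theta_1 a + (1-\theta_1)b, \ldots, 0$, each jump being a multiple of $b-a$, and I check the jump relation $(\sum_i A^{(i)} \nu^i)(b-a) = 0$ holds for the chosen normals.

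The main obstacle, and the step I expect to require the most care, is making the construction genuinely piecewise constant on all of $\R^2$ (not just on $Q$) with $\mathcal{A}{\tilde u}_n = 0$ globally while controlling the range: the naive slab construction produces a function that is $Q$-periodic in one direction but does not vanish outside $Q$, and forcing it to vanish outside requires the transition layers on every side of $Q$ to be simultaneously $\mathcal{A}$-compatible. I would handle this by taking the transition layers thin enough (width $\sim 1/n^2$, say) relative to the oscillation scale $1/n$, so their total measure is $o(1)$; inside each layer I use a staircase of values on $[a,b]$ close to $0$, which sits in $B_\varepsilon(0)$ for $n$ large. The volume-fraction and weak-$*$ claims then follow from the layers being negligible, and $\mathcal{A}{\tilde u}_n=0$ follows face-by-face from the jump condition in Remark \ref{nopenazsaltos} together with the hypothesis that no nonzero vector lies in ${\cal N}(A^{(1)}) \cap {\cal N}(A^{(2)})$, which is what makes the characteristic directions available for the transitions.
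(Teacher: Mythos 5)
There is a genuine gap in the boundary cut-off, which is in fact the whole content of the lemma. You correctly identify that the bulk laminate (slabs of $a$ and $b$ with interfaces normal to $e_1\in\mathcal{V}_{b-a}$, volume fractions $\lambda$, $1-\lambda$) is the easy part, and that the difficulty is gluing to $0$ outside $Q$. But your proposed transition layer uses only values on the segment $[a,b]$ with jumps that are multiples of $b-a$, and this cannot work. Since $\lambda a+(1-\lambda)b=0$, every point of $[a,b]$ is a multiple of $b-a$: indeed $\theta a+(1-\theta)b=(\theta-\lambda)(a-b)$. Hence every jump between such a value and $0$, or between two such values, is a multiple of $b-a$, and by Remark \ref{nopenazsaltos} such a jump is $\mathcal{A}$-compatible only across interfaces with normal in $\mathcal{V}_{b-a}$. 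After the rotation that puts $e_1\in\mathcal{V}_{b-a}$, the faces of $Q$ with normal $e_2$ require $A^{(2)}$ to annihilate the jump; but $A^{(2)}(b-a)\neq 0$, since otherwise $b-a\in\mathcal{N}(A^{(1)})\cap\mathcal{N}(A^{(2)})=\{0\}$. So no staircase of values on $[a,b]$, however fine and however the interfaces are slanted, can reach $0$ at the top and bottom faces: at some interface with a nonzero $e_2$-component of the normal you must jump by a nonzero multiple of $b-a$, which is forbidden. The hypotheses $d=2m$ and $\mathcal{N}(A^{(1)})\cap\mathcal{N}(A^{(2)})=\{0\}$ are not there to ``make the direction $\nu$ available''; $\nu$ exists simply because $b-a\in\Lambda$.

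What the paper actually does, and what is missing from your argument, is to leave the line through $a$ and $b$: it introduces auxiliary constant values $\pm c_n$ occupying thin triangular regions along the top and bottom faces, where $c_n$ is defined as the (unique) solution of the linear system
\begin{equation*}
c_n\in\mathcal{N}\bigl(A^{(2)}\bigr)\cap\Bigl(a+\mathcal{N}\Bigl(\tfrac{A^{(1)}}{\sqrt n}+\tfrac{A^{(2)}}{n}\Bigr)\Bigr).
\end{equation*}
The first condition makes the jump from $\pm c_n$ to $0$ admissible across the horizontal faces; the second makes the jumps between $\pm c_n$ and $a$ (and, after translating the construction, $b$) admissible across interfaces of slope $\pm\sqrt n$. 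Solvability is exactly where $d=2m$ and $\mathcal{N}(A^{(1)})\cap\mathcal{N}(A^{(2)})=\{0\}$ are used: the square $2m\times 2m$ matrix assembled from $A^{(2)}$ and $A^{(1)}+A^{(2)}$ is injective, hence invertible. Finally, the ``Moreover'' clause is not a soft consequence of thin layers: one computes $A^{(1)}c_n=\frac{1}{\sqrt n}A^{(2)}a$ and $A^{(2)}c_n=0$, so $c_n=c_1/\sqrt n\to 0$, which is what places ${\rm Range}(u_n)\setminus\{a,b\}=\{\pm c_n\}$ inside $B_\varepsilon(0)$ for large $n$. None of this quantitative mechanism is present in your proposal, so the key step is missing rather than merely sketched.
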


\begin{proof}
Let $a,b \in \R^d$ and suppose without loss of generality that $$A^{(1)} (b-a)=0,$$ (that is, $e_1 \in {\cal V}_{b-a}$), 
otherwise we rotate the cube $Q$.
Note that from $\lambda a + (1-\lambda) b=0$ it follows that $A^{(1)}a=A^{(1)}b=0$.
For $n \in \N$, consider the equations
\begin{equation}\label{505} 
c_n \in {\cal N}(A^{(2)}) \cap \left( a + {\cal N} \left( \frac{A^{(1)}}{\sqrt n}+\frac {A^{(2)}}{n}\right)\right).
\end{equation}
We prove that, for each $n$, equation \eqref{505} admits a solution. Let $n=1$, and rewrite \eqref{505} as
\begin{equation}\label{508}
M_1 c_1 = \left( 0, \left( A^{(1)}+A^{(2)} \right)a \right),
\end{equation}
where $M_1$ is a $2m \times d$ matrix defined by lines as $M_1:= \left(A^{(2)}, A^{(1)}+ A^{(2)} \right)$. Note that 
as ${\rm rank} (M_1)=2m$ then the columns of $M_1$ generate $\R^{2m}$ and thus \eqref{508} has a unique solution since 
$d=2m$. For $n>1$ the argument is similar.

Now consider the periodic function $u_n$, defined in the strip $Q \cap \left\{ 0 < x_1 < \frac1 {n} \right\}$ as follows
$$ u_n(x_1,x_2)= \left\{ \begin{array}{ll} a 
&{\rm if}\; x_1 < \frac{\lambda}{n}, \,\, {\sqrt n} x_1 < x_2  < 1-{\sqrt n}x_1,\\
&\\
c_n & {\rm if}\; x_1 < \frac{\lambda}{n}, \,\,  1-{\sqrt n}x_1 < x_2 < 1\,\\
&\\
-c_n & {\rm if}\; x_1 < \frac {\lambda}{n},\,\, 0 < x_2 < {\sqrt n} x_1\,\\
&\\
b & {\rm if}\; x_1 > \frac{\lambda}{n}, \,\, {\sqrt n}\frac{\lambda}{\lambda-1} \left(x_1 - \frac {\lambda}{n}\right) 
+ \frac {\lambda}{\sqrt n} < x_2 < {\sqrt n} \frac{\lambda}{1-\lambda} \left(x_1 - \frac {\lambda}{n} \right) 
+ 1 - \frac{\lambda}{\sqrt n}\,\\
&\\
c_n & {\rm if}\; x_1 > \frac {\lambda}{n}, \,\,  {\sqrt n}\frac{\lambda}{1-\lambda} \left(x_1 
- \frac {\lambda}{n} \right) + 1-\frac {\lambda}{\sqrt n} < x_2 < 1\,\\
&\\
-c_n & {\rm if}\; x_1 > \frac {\lambda}{n}, \,\, 0 < x_2 < {\sqrt n} \frac{\lambda}{\lambda-1} \left(x_1 
- \frac{\lambda}{n} \right) + \frac {\lambda}{\sqrt n}\,\\
\end{array}
\right.
$$
and then extended to the whole unit cube $Q$ by periodicity in the $x_1$-direction.

Note that $${\cal A}u_n = 0\,\,\,\,\text{ in}\,\, Q.$$ 
Indeed if, for instance, we consider the line 
$x_2 = {\sqrt n} \frac{\lambda}{\lambda-1} \left(x_1 - \frac{\lambda}{n} \right) + \frac {\lambda}{\sqrt n}$, 
whose normal is $\nu=\left(  {\sqrt n} \frac{\lambda}{\lambda-1},-1\right)$, where the function jumps $b+c_n$, 
we have $\left( A^{(1)}{\sqrt n} \frac{\lambda}{\lambda-1} - A^{(2)}\right) (b+c_n)=0$ (see Remark \ref{nopenazsaltos}). 
Moreover, if we extend $u_n$ by $0$ to all of $\R^2$, we obtain a sequence ${\tilde u}_n$ such that
$${\cal A}{\tilde u}_n =0\,\,\text{ in}\,\,\R^2.$$
In addition, $c_n \to 0$. Indeed, as $A^{(2)} c_n=0$ and $\left( \frac{A^{(1)}}{\sqrt n}+\frac {A^{(2)}}{n}\right) (c_n-a)=0$, 
it follows that $$A^{(1)} c_n =  \frac {A^{(2)}}{{\sqrt n}} a.$$ Thus we can choose $c_n = \frac {c_1} {{\sqrt n}}$.
\end{proof}

Some of the conditions of the previous lemma, namely the dimensional condition ($N=2$) and the restrictions on the operator 
$\cal A$ ($d=2m$ and ${\cal N} \left( A^{(1)} \right) \cap {\cal N} \left( A^{(2)} \right)= \{ 0 \}$) can be improved. 
We refer to Remark \ref{3D} for a discussion of possible extensions to dimension $N=3$. 
However, as the proofs are essentially the same, in what follows we restrict ourselves to the hypotheses stated in the 
lemma, for simplicity of presentation. 

We present examples of systems that verify the conditions of the above lemma.

\begin{example}
{\rm (curl, N=2)
In this case $d=2m$ and we consider $u=(u_i^j)(=(u_1^1,u_2^1,...))$, $i=1,2$ and $j=1,...,m$ (that is, $u$ is the 
derivative of a function $v=(v^1,...,v^m)$), we then have the system of m equations
$$\frac {\partial u_2^j} {\partial x_1}-\frac {\partial u_1^j} {\partial x_2}=0.$$
Here $A^{(1)}=(e_2, e_4,...)$ and $A^{(2)}=(-e_1,-e_3,...)$, where $e_1, e_2,\ldots$ are the unit coordinate vectors 
of $\R^d$.}
\end{example}

\begin{example}
{\rm (N=2, d=4)
Consider the system of two equations
$$\frac {\partial u_1} {\partial x_1} + \frac {\partial u_2} {\partial x_2}=0,\,\,\,\,\, -\frac {\partial u_4} 
{\partial x_1}+\frac {\partial u_3} {\partial x_2}=0,$$
thus, by lines, we have $A^{(1)}=(e_1,-e_4)$ and $A^{(2)}=(e_2,e_3)$. In this case  $\Lambda \subsetneq \R^4$, 
precisely,
$$\Lambda= \{ (\lambda_1,\lambda_2, \lambda_3, \lambda_4) \in \R^4: \lambda_1\lambda_3+\lambda_2\lambda_4=0 \}.$$}
\end{example}

\begin{remark}\label{3D} 
{\rm We now give an outline of the extension of the previous lemma to the case $N=3$, $d=3m$ and 
$\displaystyle \cap_{i=1}^3 {\cal N}(A^{(i)})= \{0\}.$
Let $a, b \in \R^3$ be such that 
$$A^{(1)} (b-a)=0,$$ (that is, $e_1 \in {\cal V}_{b-a}$), and 
$$A^{(3)} (b-a)=0,$$ 
(that is, $e_3 \in {\cal V}_{b-a}$), otherwise we continuously deform the cube $Q$.
Note that from $\lambda a + (1-\lambda) b=0$ it follows that $A^{(1)}a=A^{(1)}b=0, $ and $A^{(3)}a=A^{(3)}b=0$.

For $x_3$ fixed, $ \frac{1}{n} < x_3 < 1 - \frac{1}{n}$, consider the construction in the previous lemma, which
holds provided that 
$$c \in {\cal N}(A^{(2)}) \cap \left( a + {\cal N} \left(A^{(1)}+ A^{(2)}\right)\right).$$
We want to extend this construction up to the faces of the cube with normal $\pm e_3$. 
In order to do this we need that
$$c \in {\cal N}(A^{(2)})\cap {\cal N}(A^{(3)}) \cap 
\left( a + {\cal N} \left(A^{(1)}+ A^{(2)}\right)\right).$$
Then the reasoning in the proof for the case $N=2$ can be followed in order to conclude the result.}
\end{remark}

We next present some definitions that will be needed for our one level set existence theorem (cf. Theorem \ref{di1}).

\begin{definition} Let $F: \R^d \to \R\cup\{+\infty\}$. We say that $F$ is $\Lambda$-convex if it is convex 
along the directions of the characteristic cone $\Lambda$.
\end{definition}

\begin{definition}\label{Lambdaconv}
Given $E\subset \R^d$ we define
$$ \Lambda{\rm co}E := \{ \xi \in \R^d: F(\xi)\leq 0, \forall F: \R^d \to \R\cup\{+\infty\}: F\lfloor E = 0, 
F\; {\rm is}\; \Lambda-\text{convex}\}.$$
\end{definition}

\begin{theorem}\label{rp1}
Let $N=2$, $d = 2m$, and suppose that 
${\cal N} \left( A^{(1)} \right) \cap {\cal N} \left( A^{(2)} \right)= \{ 0 \}$.
Let $F: \R^d \to \R$ be continuous, $\Lambda$-convex and suppose that $F$ is coercive, i.e.,
$\displaystyle \lim_{|\xi| \to +\infty} F(\xi)=+\infty$. Let 
$$E = \{ \xi : F(\xi) = 0\}.$$ Then  $\Lambda{\rm co}E$ has the relaxation property with respect to $E$.
\end{theorem}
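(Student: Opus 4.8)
The plan is to verify the relaxation property of Definition \ref{repo} for $K = \Lambda{\rm co}E$ directly, using Remark \ref{Rbound} to reduce to the case where $\Omega = Q$ is the unit cube, and using Lemma \ref{lemma1} as the basic building block. First I would record the structural description of $\Lambda{\rm co}E$ that we actually need: since $F$ is continuous, $\Lambda$-convex and coercive, the sublevel set $\{F \le 0\}$ is compact, hence so is $\Lambda{\rm co}E \subset \{F \le 0\}$ (by the very definition, since $F$ itself is an admissible competitor in Definition \ref{Lambdaconv}). By the general theory (or by Proposition \ref{induction}, which says $\Lambda{\rm co}E$ is generated from $E$ by iteratively adding segments in characteristic cone directions, together with Carath\'eodory-type reasoning), every $\xi \in \Lambda{\rm co}E$ lies in a rank-one-type laminate supported on $E$; concretely, $\xi$ is obtained from points of $E$ by finitely many splittings $\xi = \lambda a + (1-\lambda) b$ with $b - a \in \Lambda$. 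The proof then proceeds by induction on the number of splittings needed to reach $\xi$ from $E$.

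For the base case, $\xi \in E$: take $u_n \equiv \xi$; then ${\rm dist}(u_n;E) = 0$ and all conditions hold trivially. For the inductive step, suppose $\xi \in {\rm int}\,\Lambda{\rm co}E$ is obtained as $\xi = \lambda \xi_1 + (1-\lambda)\xi_2$ with $\xi_2 - \xi_1 \in \Lambda$, where each $\xi_j$ requires strictly fewer splittings. After translating so that $\xi = 0$ (i.e. replacing $u$ by $u - \xi$; note $\Lambda$-convexity and the form of $\mathcal A$ are translation-covariant and $E - \xi$ plays the role of the target), Lemma \ref{lemma1} applied to $a = \xi_1 - \xi$, $b = \xi_2 - \xi$ produces piecewise constant $\tilde u_n \debaixodasetafraca{*}{} 0$, with $\mathcal A \tilde u_n = 0$, $\tilde u_n = 0$ outside $Q$, taking values $a$ and $b$ on sets of measure tending to $\lambda$ and $1-\lambda$ respectively, and with $\mathrm{Range}(\tilde u_n) \setminus \{a,b\} \subset B_\varepsilon(0)$. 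Translating back, this is a piecewise-constant $\mathcal A$-free function equal to $\xi$ off $Q$, taking mostly the two values $\xi_1, \xi_2 \in \Lambda{\rm co}E$, with the small remainder staying in $B_\varepsilon(\xi) \subset {\rm int}\,\Lambda{\rm co}E$ once $\varepsilon$ is small (here we use $\xi \in {\rm int}\,K$). On each cube of the (finitely many, up to a null set) level sets where the function equals $\xi_j$, we invoke the inductive hypothesis on an appropriately rescaled subcube to further refine towards $E$, using the observation from Remark \ref{Rbound} that these sequences may be taken constant near the boundary of each subcube so that $\mathcal A$-freeness is preserved under gluing. A standard diagonal argument over the approximation parameters then yields a single sequence $u_n$ with $u_n \debaixodasetafraca{*}{} \xi$, $u_n = \xi$ off $\Omega$, $\mathcal A u_n = 0$, $u_n(x) \in E \cup {\rm int}\,K$ a.e., and $\int_\Omega {\rm dist}(u_n;E)\,dx \to 0$.

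The main obstacle I expect is twofold. First, one must be careful that all intermediate values stay inside ${\rm int}\,K$: at the top level this is fine because $\xi \in {\rm int}\,K$ and the stray values are within $B_\varepsilon(\xi)$, but at deeper levels of the induction the points $\xi_j$ and the auxiliary values $\pm c_n$ from Lemma \ref{lemma1} sit only in $K$, possibly on $\partial K$, so one needs a small perturbation/relabelling argument — typically one splits $\xi = \lambda \xi_1' + (1-\lambda)\xi_2'$ with $\xi_j'$ slightly pulled into the interior along $\Lambda$ and absorbs the error, exploiting openness of ${\rm int}\,K$ and continuity. Second, the bookkeeping in the diagonalization: the number of subcubes and the side-length rescalings compound across the (bounded) number of induction levels, so one must fix the order of limits carefully — first the innermost Lemma \ref{lemma1} frequency $n$, then the layer refinements, then the Vitali exhaustion from Remark \ref{Rbound} — and check that the $L^1$-distance-to-$E$ errors and the weak-$*$ metric errors both go to zero uniformly. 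Neither difficulty is conceptual; they are the routine-but-delicate glue, and they are exactly where the hypotheses ($N=2$, $d=2m$, ${\cal N}(A^{(1)}) \cap {\cal N}(A^{(2)}) = \{0\}$, coercivity giving compactness of $K$) get used.
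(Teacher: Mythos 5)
Your outline would eventually get there, but it takes a genuinely different and much heavier route than the paper, and in doing so it misses the one observation that the coercivity hypothesis is there to provide. The paper first shows that $\Lambda{\rm co}E=\{\xi: F(\xi)\le 0\}$: if $F(\xi_0)<0$, pick any $\lambda\in\Lambda$; by coercivity and continuity the map $t\mapsto F(\xi_0+t\lambda)$ vanishes at some $t_1<0<t_2$ while staying negative in between, so both endpoints $\xi_0+t_i\lambda$ lie in $E$ and $\Lambda$-convexity of any admissible $G$ forces $G(\xi_0)\le 0$. The same observation proves the relaxation property in one step: every $\xi\in{\rm int}\,\Lambda{\rm co}E$ is a convex combination $\xi=\eta a+(1-\eta)b$ along a $\Lambda$-direction of two points $a=\xi+t_1\lambda$, $b=\xi+t_2\lambda$ that \emph{already belong to} $E$, so a single application of Lemma \ref{lemma1} produces the required sequence (values $a,b\in E$ on sets of measure $\to\eta$, $1-\eta$; stray values in $B_\varepsilon(\xi)\subset{\rm int}\,\Lambda{\rm co}E$; $\int_Q{\rm dist}(u_n;E)\,dx\to 0$ by boundedness). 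In other words, the lamination depth is one, so there is no induction on splittings, no gluing of relaxation sequences on the level sets of a piecewise constant function, and no diagonalization over layers. Your proposal instead imports the machinery of Proposition \ref{induction} and essentially re-derives the argument of Theorem \ref{aproxprop} (the several-level-set case), and the two difficulties you flag as ``routine-but-delicate glue'' --- keeping intermediate values in $E\cup{\rm int}\,K$ at deeper levels, and the compounded bookkeeping of the nested diagonalization --- are exactly the complications that the direct argument avoids entirely. (They are resolvable: in this setting $K=\{F\le 0\}=E\cup{\rm int}\,K$, and Remark \ref{Rbound} handles the gluing, as in the proof of Theorem \ref{aet}; but carrying them out here is unnecessary work.) I would also note that you assert compactness of $K$ but never establish the identification $\Lambda{\rm co}E=\{F\le 0\}$, which is the structural fact the paper proves first and on which the one-step argument rests.
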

\begin{proof}
The proof follows that of Theorem $6.11$ in \cite{Dac_Mar_99}.

First note that $\Lambda{\rm co}E = \{ \xi : F(\xi) \leq 0\}.$
Clearly we have that $\Lambda {\rm co}E \subset \{ \xi : F(\xi) \leq 0\}.$
On the other hand, assuming that $F(\xi_0) < 0$ ( if $F(\xi_0) = 0$ there is nothing to prove), 
choose a direction $\lambda \in \Lambda$. By coercivity we have that there exist $t_1 < 0 < t_2$ such that
$$ F(\xi_0 + t\lambda) < 0, \forall t \in (t_1, t_2),$$ 
\noindent and $F(\xi_0 + t_1\lambda) = F(\xi_0 + t_2\lambda) = 0$ and hence $\xi_0 + t_i\lambda \in E, i=1,2.$ 
Given $G$ admissible for the definition of $\Lambda {\rm co}E$, we have that $G(\xi_0 + t_i\lambda) = 0, i=1,2$ and since  
$G$ is $\Lambda$-convex we conclude that $G(\xi_0) \leq 0$, that 
is, $\xi_0 \in \Lambda {\rm co}E$.

We now prove the relaxation property. We need to show that  for every $\xi \in{\rm int} \Lambda{\rm co}E$ 
we can construct a sequence as follows
\begin{eqnarray*}   
(u_n) \subset L_{\rm piec}^\infty(\R^N; \R^d),&& \; u_n \debaixodasetafraca{*}{} \xi, \; u_n = \xi \; {\rm in}
\; \R^N\backslash Q, \; \mathcal{A}u_n = 0 \; {\rm in}\; {\mathcal D}'(\R^N;\R^m)\\\\
&\,& \,u_n(x) \in E \cup {\rm int} \Lambda {\rm co}E\; {\rm a.e.} \; x \in \Omega,\\\\
&\,&\int_\Omega {\rm dist}(u_n(x); E)\; dx \to 0, \; ( n \to \infty).
\end{eqnarray*}

\noindent  By the coercivity of $F$ and choosing $\lambda \in \Lambda$ we have that 
$$\exists t_1 < 0 < t_2: F(\xi + t\lambda) < 0, \; \forall t \in (t_1, t_2), $$
$$ F(\xi + t_1\lambda ) = F(\xi + t_2\lambda) = 0.$$
Set $$a:= \xi+t_1 \lambda, \; b:= \xi+t_2  \lambda.$$
Notice that $b - a = (t_2 - t_1)\lambda \in \Lambda $ and that for $\eta := \frac {t_2} {t_2-t_1}$ we have that 
$\xi = \eta a + (1 - \eta) b$.
Hence the result follows immediately from Lemma \ref{lemma1} (which obviously applies also to the case $\xi \neq 0$). 
\end{proof}

By combining Theorems \ref{aet} and \ref{rp1}, and taking into account the construction in Lemma \ref{lemma1},  
we have the following result.

\begin{theorem}\label{di1}
Let $N=2$, $d = 2m$, and suppose that ${\cal N} \left( A^{(1)} \right) \cap {\cal N} \left( A^{(2)} \right)= \{ 0 \}$. 
Let $F$ be continuous, $\mathcal{A}$-quasiconvex, suppose that $F$ is coercive, i.e.,
$\displaystyle \lim_{|\xi| \to +\infty} F(\xi)=+\infty$ and set
$$E= \{ \xi: F(\xi)=0 \}.$$
Let $\xi \in E \cup {\rm int} \Lambda{\rm co}E$. Then $(P)$ has a solution.
\end{theorem}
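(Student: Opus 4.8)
The plan is to derive Theorem \ref{di1} by feeding the correct data into the abstract existence result Theorem \ref{aet}, using Theorem \ref{rp1} to supply the one hypothesis that is not immediate, namely the relaxation property. Set $K := \Lambda{\rm co}E$. The first thing to observe is that an $\mathcal A$-quasiconvex function is automatically $\Lambda$-convex: testing the defining inequality with one-dimensional oscillating profiles along a direction of the characteristic cone (such fields being $\mathcal A$-free, as recorded in Remark \ref{nopenazsaltos}) and passing to a sawtooth limit yields convexity along every direction of $\Lambda$; see \cite{Fon_Mul_99}. Hence $F$ meets the hypotheses of Theorem \ref{rp1}, so $K = \Lambda{\rm co}E$ has the relaxation property with respect to $E$ — verified there for a cube and extended, via Remark \ref{Rbound}, to the bounded open set $\Omega$ of problem $(P)$.

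Next I would check the remaining hypotheses of Theorem \ref{aet}. By coercivity and continuity of $F$, the sublevel set $\{\xi \in \R^d : F(\xi) \le 0\}$ is compact; therefore $E = \{\xi : F(\xi) = 0\}$ is bounded, and by the first part of the proof of Theorem \ref{rp1} so is $K = \Lambda{\rm co}E = \{\xi : F(\xi) \le 0\}$, with trivially $K \subset \{\xi : F(\xi) \le 0\}$. Thus Theorem \ref{aet} applies to the triple $F$, $E$, $K$, and since by assumption $\xi \in E \cup {\rm int}\,\Lambda{\rm co}E = E \cup {\rm int}\,K$, it yields $u \in L^\infty(\R^N;\R^d) \cap \mathcal{N}(\mathcal{A})$ with $F(u(x)) = 0$ for a.e.\ $x \in \Omega$ and $u = \xi$ on $\R^N \setminus \Omega$. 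Since $F(u(x)) = 0$ is precisely $u(x) \in E$, this $u$ satisfies all three requirements of $(P)$, which completes the argument.

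The only point that needs genuine care — and the reason the statement refers back to the construction of Lemma \ref{lemma1} — is hidden inside the invocation of Theorem \ref{rp1}: one must know that the laminate $u_n$ produced there takes its values in $E \cup {\rm int}\,\Lambda{\rm co}E$, not merely in $\{F \le 0\}$. This is where $\xi \in {\rm int}\,\Lambda{\rm co}E$ is used: after the translation that covers the case $\xi \neq 0$, the auxiliary values $\pm c_n$ of the construction tend to $\xi$, hence lie in ${\rm int}\{F \le 0\} = {\rm int}\,\Lambda{\rm co}E$ once $n$ is large, while the principal values $a,b$ lie on $E$; moreover the triangular regions carrying the auxiliary values have total measure tending to $0$, which also gives $\int_\Omega {\rm dist}(u_n;E)\,dx \to 0$. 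The boundary case $\xi \in E$ is trivial, taking $u \equiv \xi$. Apart from this, the proof is pure assembly of already-established results, and I expect no further obstacle.
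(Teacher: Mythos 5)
Your proposal is correct and follows exactly the paper's route: the paper's proof of Theorem \ref{di1} is precisely the assembly of Theorem \ref{rp1} (to obtain the relaxation property of $\Lambda{\rm co}E$, via the construction of Lemma \ref{lemma1}) with the abstract existence result Theorem \ref{aet}. Your additional remarks — that $\mathcal A$-quasiconvexity implies the $\Lambda$-convexity required by Theorem \ref{rp1}, that coercivity gives the boundedness of $E$ and $K$, and that the laminate's auxiliary values $\pm c_n$ land in ${\rm int}\,\Lambda{\rm co}E$ — are exactly the points the paper leaves implicit, so you have simply made the same argument explicit.
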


\begin{proof}
Follows from Theorems \ref{aet} and \ref{rp1}.
\end{proof}

\begin{example}
{\rm The problem 
$$u \in L^{\infty}(\R^2;\R^2), \; u_1^2+u_2^2=1 \; {\rm in} \; \Omega, {\rm div}\,u=0, \; 
u=\xi \; {\rm in} \; \R^2 \setminus \Omega$$ 
has infinitely many solutions if $|\xi| < 1$. Indeed, it is enough 
to apply Theorem \ref{di1} to the function $F(x_1,x_2)=x_1^2+x_2^2-1$.}
\end{example}

We now examine the several level sets case. Examples presented in \cite{Dac_Mar_99}, in the context of gradients, 
show that it is important to consider more general sets $E$, namely to treat the case of 
$$E= \{ \xi \in \R^d: F_i (\xi)=0, i=1,...,I \}$$
with $F_i$, $i=1,..,I$, continuous, $\cal A$-quasiconvex functions. With this extension in mind we present 
more general results about the relaxation property.
We start with an alternative characterisation of $\Lambda{\rm co}E$ (see Definition \ref{Lambdaconv}).
This set is also called by some authors, in the case of $\mathcal{A} = \;{\rm curl}$, the {\it laminate} 
$\Lambda$-convex hull and is, in general, different from the $\Lambda$-convex hull defined using only real-valued 
functions.

\begin{proposition}\label{induction}
Setting $\Lambda_0{\rm co}E = E$ 
\noindent and
$$ \Lambda_{i+1}{\rm co}E = \{ \xi \in \R^d: \xi = ta + (1-t)b, \; a, b \in \Lambda_{i}{\rm co}E, \, 
b-a \in \Lambda, \; t \in [0,1]\},$$
\noindent then
$$ \Lambda{\rm co}E = \cup_{i \in \N} \Lambda_{i}{\rm co}E.$$
\end{proposition}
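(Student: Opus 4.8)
The plan is to prove the two inclusions separately. Write $L := \cup_{i \in \N} \Lambda_i{\rm co}E$ and recall $\Lambda{\rm co}E = \{\xi : G(\xi) \le 0 \text{ for all } \Lambda\text{-convex } G : \R^d \to \R \cup \{+\infty\} \text{ with } G\lfloor E = 0\}$. For the inclusion $L \subset \Lambda{\rm co}E$, I would argue by induction on $i$ that $\Lambda_i{\rm co}E \subset \Lambda{\rm co}E$. The base case $\Lambda_0{\rm co}E = E \subset \Lambda{\rm co}E$ is immediate since any admissible $G$ vanishes on $E$. For the inductive step, take $\xi = ta + (1-t)b$ with $a, b \in \Lambda_i{\rm co}E \subset \Lambda{\rm co}E$, $b - a \in \Lambda$, and $t \in [0,1]$. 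For any admissible $\Lambda$-convex $G$ we have $G(a) \le 0$ and $G(b) \le 0$ by the inductive hypothesis, and since the segment $[a,b]$ lies in a direction of the characteristic cone, $\Lambda$-convexity gives $G(\xi) \le t G(a) + (1-t) G(b) \le 0$. Hence $\xi \in \Lambda{\rm co}E$, completing the induction, and so $L \subset \Lambda{\rm co}E$.

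For the reverse inclusion $\Lambda{\rm co}E \subset L$, the strategy is to exhibit a single $\Lambda$-convex function that is nonpositive exactly on $L$ and then invoke the definition of $\Lambda{\rm co}E$. Concretely, I would first check that $L$ is itself $\Lambda$-convex in the sense that it is stable under taking segments in characteristic directions: if $a, b \in L$ with $b - a \in \Lambda$, then $a \in \Lambda_i{\rm co}E$ and $b \in \Lambda_j{\rm co}E$ for some $i, j$; taking $k = \max\{i,j\}$ and using the (obvious, and to be noted) monotonicity $\Lambda_i{\rm co}E \subset \Lambda_{i+1}{\rm co}E$ — which holds because $\xi = 1\cdot\xi + 0\cdot\xi$ with $\xi - \xi = 0 \in \Lambda$ — we get $a, b \in \Lambda_k{\rm co}E$, whence the whole segment $[a,b]$ lies in $\Lambda_{k+1}{\rm co}E \subset L$. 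Now define
$$ G_L(\xi) := \begin{cases} 0 & \text{if } \xi \in L, \\ +\infty & \text{otherwise.} \end{cases} $$
Then $G_L\lfloor E = 0$ since $E = \Lambda_0{\rm co}E \subset L$, and $G_L$ is $\Lambda$-convex: along any line with direction in $\Lambda$, if both endpoints of a segment have $G_L = +\infty$ the convexity inequality is trivial, if one endpoint is $+\infty$ it is again trivial, and if both endpoints lie in $L$ then by the stability just established the entire segment lies in $L$, so $G_L \equiv 0$ there and convexity holds with equality. Therefore $G_L$ is admissible in the definition of $\Lambda{\rm co}E$, which forces every $\xi \in \Lambda{\rm co}E$ to satisfy $G_L(\xi) \le 0$, i.e. $G_L(\xi) = 0$, i.e. $\xi \in L$. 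This gives $\Lambda{\rm co}E \subset L$ and completes the proof.

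The only genuinely delicate point is verifying that $G_L$ is $\Lambda$-convex — equivalently, the segment-stability of $L$ — and this is exactly where the monotonicity $\Lambda_i{\rm co}E \subset \Lambda_{i+1}{\rm co}E$ is used to put two points at a common level before applying one more iteration. Everything else is bookkeeping. I should also remark that no closure or compactness enters here: $\Lambda{\rm co}E$ as defined via extended-real-valued $\Lambda$-convex functions is precisely the algebraic laminate hull $L$, with no topological closure taken, which is consistent with the paragraph preceding the proposition distinguishing this set from the real-valued $\Lambda$-convex hull.
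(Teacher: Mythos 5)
Your proof is correct. The first inclusion is handled exactly as in the paper: induction on $i$, using the $\Lambda$-convexity of each admissible test function along characteristic segments. For the reverse inclusion your final step in fact coincides with the paper's: both arguments ultimately test $\Lambda{\rm co}E$ against the extended-valued indicator function of $L=\cup_{i}\Lambda_i{\rm co}E$, which vanishes on $E$ and is $\Lambda$-convex, forcing $\Lambda{\rm co}E\subset L$. Where you genuinely differ is in how the $\Lambda$-convexity of that indicator is established. The paper takes a detour through a Kohn--Strang-type formula for the $\Lambda$-convex envelope of a general function bounded from below, namely $\Lambda f=\inf_{i}\Lambda_i f$, proves by induction that $\Lambda_i\chi_E=\chi_{\Lambda_i{\rm co}E}$, and identifies the decreasing limit as $\Lambda\chi_E=\chi_{L}$, so that $\Lambda$-convexity of $\chi_L$ is inherited from that of the envelope. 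You instead verify directly that $L$ is stable under characteristic segments --- using the monotonicity $\Lambda_i{\rm co}E\subset\Lambda_{i+1}{\rm co}E$, which you correctly justify via $0\in\Lambda$, to place the two endpoints at a common level $k$ before applying one more iteration --- and then check the convexity inequality for the indicator case by case. Your route is shorter and more self-contained for the proposition as stated; the paper's route is longer but yields the envelope formula $\Lambda f=\inf_i\Lambda_i f$ as a by-product of independent interest, and the step the paper dismisses as ``it is easy to see that $\varphi$ is $\Lambda$-convex'' is essentially the verification you carry out explicitly. I see no gap in either direction of your argument.
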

\begin{proof}
An easy induction argument shows that 
$\Lambda_{i}{\rm co}E \subset \Lambda{\rm co}E$, $\forall i \in \N$ so that
$\cup_{i \in \N} \Lambda_{i}{\rm co}E \subset \Lambda{\rm co}E.$ 
In order to prove the reverse inclusion, note that given a function 
$f: \R^d \to \R\cup\{+\infty\}$ its $\Lambda$-convex  envelope is defined by
$$ \Lambda f = \; {\rm sup}\; \{ g: g \; {\rm is} \; \Lambda-{\rm convex},\; g\leq f \}.$$

We want the following characterisation to hold (similar to the one by Kohn-Strang, cf. \cite{KS}, in the rank-one
case).

Setting $\Lambda_0 f = f$ and for $i \in \N$
$$ \Lambda_{i+1} f(\xi) = \inf \{ t\Lambda_i f (a) + ( 1- t) \Lambda_i f(b), \; t \in [0,1], \; \xi = ta + (1-t)b, 
\; b-a \in \Lambda\},$$
\noindent then we claim that
$$ \Lambda f (\xi) = \inf_{i \in \N} \Lambda_i f(\xi).$$

Notice that for this characterisation to make sense, we need to be able to express a general vector $\xi \in \R^d$ by a 
convex combination of two vectors $ a, b \in \R^d$ such that $b-a \in \Lambda$. 
This can be done in the following way: given $\xi \in \R^d$ and $c, d \in \R^d$ such that $ c- d \in \Lambda$, we write
$ \xi = ta + (1-t)b$, for 
$$ a = \xi - (1-t)(c-d), \; \; b = \xi + t(c-d).$$
To prove the claim it suffices to note the following:
\begin{itemize}
\item[i)] the sequence $\Lambda_i f$ is decreasing, i.e. $\Lambda_{i+1} f \leq \Lambda_i f$, and
equality holds only if $\Lambda_i f$ is $\Lambda$-convex;
\item[ii)] assuming that $f$ is bounded from below then $\Lambda_i f$ is also bounded from below
(notice that we will apply this characterisation to a characteristic function of a set so this hypothesis is not restrictive in our case).
\end{itemize}
From i) and ii) it follows that the sequence $\Lambda_i f$ converges. We denote by
$$ \varphi(\xi) = \lim_{i \to + \infty}\Lambda_i f(\xi) = \inf_{i \in \N}\Lambda_i f(\xi).$$
Clearly, $\varphi \leq f$, and it is easy to see that $\varphi$ is $\Lambda$-convex, and that
if $g \leq f$ is $\Lambda$-convex then $g \leq \varphi$. We conclude that $\varphi = \Lambda f$.

We now apply the above characterisation to $\chi_E$, the characteristic function of the set $E$ (characteristic function 
in the convex analysis sense). An induction argument shows that
$$ \Lambda_i \chi_E = \chi_{\Lambda_i{\rm co} E}$$
\noindent and thus, passing to the limit,
$$ \Lambda \chi_E = \chi_ {\cup_{i} \Lambda_i {\rm co}E}.$$  
Since $\Lambda \chi_E \lfloor E = 0$ and it is a $\Lambda$-convex function we deduce that for 
$\xi \in \Lambda {\rm co}E$ we have that $\Lambda \chi_E (\xi) = 0$ so the previous equality
yields $\xi \in \cup_{i \in \N} \Lambda_{i}{\rm co}E$ and the proof is complete.
\end{proof}

\begin{theorem} \label{aproxprop} Let $E \subset \R^d$ be a compact set, and suppose that there exists a family 
$E_{\delta}$ of sets, with $\delta >0$, with the property that for every $\varepsilon >0$ there exists a 
$\delta_0=\delta_0(\varepsilon)$ such that
\begin{equation}\label{aprox}
E_{\delta} \subset \{ \xi: {\rm dist} (\xi;E) \leq \varepsilon \}
\end{equation}
for every $\delta \in (0,\delta_0]$. Suppose also that
\begin{equation}\label{aprox1}
K(E_{\delta}) = {\Lambda{\rm co} E_\delta} \subset {\rm int} \Lambda{\rm co} E,
\end{equation}
and that for every $\xi \in {\rm int} \Lambda{\rm co} E$ we have $\xi \in K(E_{\delta})$ for $\delta$ small enough.
Then $ \Lambda {\rm co} E$ has the relaxation property with respect to $E$.
\end{theorem}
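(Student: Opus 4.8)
The plan is to establish the relaxation property for $\Lambda{\rm co}E$ directly from Definition \ref{repo}, using the hypotheses to reduce everything to statements about the approximating sets $E_\delta$. Fix an open set $\Omega$ (by Remark \ref{Rbound} it suffices to take $\Omega = Q$) and a point $\xi \in {\rm int}\,\Lambda{\rm co}E$. By the last hypothesis, we may pick $\delta$ small enough that $\xi \in K(E_\delta) = \Lambda{\rm co}E_\delta$, and by \eqref{aprox1} in fact $\xi \in {\rm int}\,\Lambda{\rm co}E_\delta$. Now I would invoke Proposition \ref{induction}: since $\xi \in \Lambda{\rm co}E_\delta = \cup_{i} \Lambda_i{\rm co}E_\delta$, there is a finite $i = i(\delta)$ with $\xi \in \Lambda_i{\rm co}E_\delta$. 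This expresses $\xi$ as the endpoint of a finite binary tree of splittings $\xi = ta + (1-t)b$ with $b-a \in \Lambda$ at each node, whose leaves lie in $E_\delta$.

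The core step is to convert this finite laminate decomposition into an admissible piecewise constant sequence. At each node of the tree one applies Lemma \ref{lemma1} (in the form valid for arbitrary center, as used in the proof of Theorem \ref{rp1}): given $\xi = t a + (1-t)b$ with $b-a$ in the characteristic cone, one obtains piecewise constant, $\mathcal A$-free functions agreeing with $\xi$ outside a cube, weak-$*$ converging to $\xi$, taking values $a$ and $b$ on sets of measure close to $t$ and $1-t$ respectively, plus a vanishing "transition" part contained in an arbitrarily small ball around $\xi$. Iterating this construction through the $i$ levels of the tree — rescaling the Lemma's construction into each subcube produced at the previous level, as in Remark \ref{Rbound} and in the proof of Theorem \ref{aet} — yields a sequence $u_n^\delta$ of piecewise constant, $\mathcal A$-free functions with $u_n^\delta = \xi$ outside $Q$, $u_n^\delta \weakst \xi$, taking values in (a small neighbourhood of the finite vertex set of the tree) $\cup$ $E_\delta$, with
$$\int_Q {\rm dist}(u_n^\delta(x); E_\delta)\, dx \to 0 \quad (n \to \infty).$$
One must also check that all the vertices appearing in the tree lie in ${\rm int}\,\Lambda{\rm co}E$ (so that the constraint $u_n^\delta(x) \in E \cup {\rm int}\,\Lambda{\rm co}E$ is met): the leaves are in $E_\delta \subset \Lambda{\rm co}E$, and since each vertex lies on a segment between two points of $\Lambda{\rm co}E$ in a $\Lambda$-direction with $\xi$ an interior point, a convexity-along-$\Lambda$ argument (or openness of ${\rm int}\,\Lambda{\rm co}E$ together with \eqref{aprox1}) places the whole finite construction, after shrinking the transition balls, inside $E \cup {\rm int}\,\Lambda{\rm co}E$.

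Finally I would run a diagonal argument in $\delta$ and $n$. Given $\varepsilon > 0$, choose $\delta \le \delta_0(\varepsilon/2)$ so that by \eqref{aprox} every point of $E_\delta$ is within $\varepsilon/2$ of $E$; then for $n$ large, ${\rm dist}(\cdot;E) \le {\rm dist}(\cdot;E_\delta) + \varepsilon/2$ gives $\int_Q {\rm dist}(u_n^\delta(x);E)\,dx$ small, while shrinking the transition balls below $\varepsilon/2$ handles the non-leaf values. Extracting a suitable diagonal sequence $u_n := u_{n}^{\delta(n)}$ that still converges weak-$*$ to $\xi$ (legitimate since the weak-$*$ topology is metrisable on the bounded set where all these functions live — their $L^\infty$ norms are controlled by the diameter of $\Lambda{\rm co}E$, which is compact since $E$ is) produces the sequence required in Definition \ref{repo}.

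The main obstacle I expect is the bookkeeping in the iterated Lemma \ref{lemma1} construction: one must ensure that at every level the laminate built in a subcube can be made $\mathcal A$-free when glued to its neighbours — which works because each piece is constant near the boundary of its subcube, as noted in Remark \ref{Rbound} — and that the error terms (the measure of the "wrong value" regions and the size of the transition balls) accumulated over the $i(\delta)$ levels remain controllable as $\delta \to 0$, even though $i(\delta)$ may blow up. This is handled by choosing $n$ large enough, depending on $\delta$, at the very end, so that for each fixed $\delta$ the finite-depth construction is as accurate as needed before passing to the diagonal.
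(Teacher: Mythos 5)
Your proposal follows essentially the same route as the paper's proof: pass to $E_\delta$ via the approximation hypotheses, use Proposition \ref{induction} to write $\xi$ as a finite laminate with leaves in $E_\delta$, iterate the construction of Lemma \ref{lemma1} through the levels of that decomposition to produce piecewise constant $\mathcal{A}$-free sequences with $\int_Q {\rm dist}(\cdot;E_{\delta})\,dx \to 0$, and finish with a diagonal argument in $\delta$ and $n$ together with \eqref{aprox}. Your version is, if anything, more explicit than the paper's about the gluing and interior-membership bookkeeping, but the underlying argument is the same.
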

\begin{proof}
Fix $k \in \N$. Let $\xi \in  {\rm int} \Lambda{\rm co} E$, then $\xi \in \Lambda{\rm co} E_{\delta_k}$ for some 
$\delta_k$ small enough which we assume to verify the condition $\delta_k < \frac1 {k}$. By Proposition \ref{induction} 
there exists $I_k \in \N$ such that $\xi \in \Lambda{\rm co}^{I_k} E_{\delta_k}$. Thus we have
$$\xi=\lambda \xi_1 + (1 - \lambda) \xi_2$$
for some $\lambda \in [0,1]$,  and $\xi_1$, $\xi_2 \in \Lambda{\rm co}^{I_k-1} E_{\delta_k}$ such that 
$\xi_2 - \xi_1 \in \Lambda$. 
Applying Lemma \ref{lemma1} we can find a 
sequence of piecewise constant functions $u_n^k \weakst \xi$ as $n \to \infty$ , such that 
$u_n^k (x) \in {\rm int} \Lambda{\rm co} E$ for a.e. $x \in Q$, ${\cal A}u_n^k=0$, moreover
$$\mathcal{L}^N\left(\{ x: u_n^k (x) \notin \{ \xi_1, \xi_2 \} \}\right) \to 0$$
as $n \to \infty$.
Applying  Lemma \ref{lemma1} successively, we find a new sequence $v_n^k$ of piecewise constant functions, such that
\begin{equation}\label{m0}
\mathcal{L}^N\left(\{ x: v_n^k (x) \notin E_{\delta_k} \}\right) \to 0
\end{equation}
as $n \to \infty$. We also have 
$$v_n^k \weakst \xi,\,\,\,\,\, {{\cal A} v_n^k}=0,\,\,\,\,\, \int_{Q} {\rm dist}(v_n^k;E_{\delta_k}) \to 0$$
as $n \to \infty$, where the last condition follows from (\ref{m0}) and the fact that $\Lambda{\rm co} E$ is bounded.
Now it is enough to consider an appropriate sequence $u_k = v_{n_k}^k$, and use (\ref{aprox}).
\end{proof}

\begin{remark}
{\rm Conditions (\ref{aprox}) and (\ref{aprox1}) are a particular case of the approximation property in \cite{Dac_Mar_99} 
(see Definition 6.12).}
\end{remark}

\begin{remark}
{\rm In particular, for ${\cal A}={\rm curl}$, the above theorem is similar to Theorem 6.14 in \cite{Dac_Mar_99}.
In \cite{Dac_Mar_99}, the general abstract Theorem 6.14 is also refined to sets of the form
$$E_{\delta}:= \{ \xi: F_i^{\delta}(\xi)=0, \, i=1,..,I \}$$
where $F_i^{\delta}, i=1,..,I$, is a family of quasiconvex functions, continuous with respect to the parameter 
$\delta \in [0, \delta_0]$, and $F_i^0=F_i$  (see Theorem 6.22),
and it is used to obtain existence results for differential inclusions involving singular values (Theorem 7.28) 
or potential wells (Theorem 8.5).}
\end{remark}

If we are able to compute $\Lambda{\rm co} E$, and if it is compact and star shaped, the following theorem gives an 
existence result. However, we note that even when $E$ is compact, we do not necessarily have that 
$\Lambda{\rm co}E$ is compact, even in the case of $\mathcal{A}$ = curl.

\begin{theorem} \label {system}
Let $\Omega \subset \R^N$ be an open set, let $F_i : \R^d \to \R$ be continuous and 
$\mathcal A$-quasiconvex, $i = 1, \ldots, I$, and let
$$E = \left\{\xi \in \R^d : F_i(\xi) = 0, i = 1, \ldots, I\right\}.$$
Assume that $\Lambda {\rm co}E$ is compact and strongly star shaped with respect to a fixed
$\xi_0 \in {\rm int}\Lambda {\rm co}E$ (i.e. for every 
$\xi \in \Lambda {\rm co}E$ and every $t \in (0,1]$ we have 
$t\xi_0 + (1-t) \xi \in {\rm int}\Lambda {\rm co}E$). If $0 \in E \cup {\rm int}\Lambda {\rm co}E$
then there exists (a dense set of) $u \in L^{\infty}(\Omega;\R^d)$ such that
$F_i(u(x)) = 0$ for a.e. $x \in \Omega$, $i = 1, \ldots, I$, $u = 0$ on $\partial \Omega$.
\end{theorem}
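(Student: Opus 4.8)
The plan is to reduce Theorem \ref{system} to the abstract existence Theorem \ref{aet} by verifying that $K := \Lambda{\rm co}E$ has the relaxation property with respect to $E$, and then checking that the hypotheses of Theorem \ref{aet} are met with the choice of a single test function that detects membership in $E$. First, since each $F_i$ is continuous and $\mathcal A$-quasiconvex, the function $F := \sum_{i=1}^I |F_i|$ (or $\max_i |F_i|$) is continuous, and one checks using Jensen's inequality for $\mathcal A$-quasiconvex functions together with the elementary fact that $|\cdot|$ composed appropriately preserves the averaging inequality — more safely, one works directly with the family $F_1,\dots,F_I$ and invokes the multi-function version of Theorem \ref{aet} recorded in Remark \ref{Elimitado}(1). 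Either way, $E = \{\xi : F_i(\xi)=0,\ i=1,\dots,I\}$ and $K \subset \{\xi : F_i(\xi)\le 0 \ \text{is vacuous}\}$; here the role of ``$K \subset \{F\le 0\}$'' is played by the observation that each $F_i$, being $\mathcal A$-quasiconvex hence $\Lambda$-convex, is admissible in Definition \ref{Lambdaconv}, so $F_i \le 0$ on $\Lambda{\rm co}E$. Both $E$ and $K$ are bounded since $\Lambda{\rm co}E$ is assumed compact. Thus the only nontrivial input needed for Theorem \ref{aet} is the relaxation property for $K = \Lambda{\rm co}E$.

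The heart of the argument is therefore to establish the relaxation property directly, and the strong star-shapedness with respect to $\xi_0 \in {\rm int}\,\Lambda{\rm co}E$ is exactly what makes this possible without the approximation hypotheses of Theorem \ref{aproxprop}. The construction is the standard one: given $\xi \in {\rm int}\,\Lambda{\rm co}E$, by Proposition \ref{induction} there is some $I_0$ with $\xi \in \Lambda_{I_0}{\rm co}E$, so $\xi$ is obtained from points of $E$ by finitely many admissible laminations $\zeta = t\zeta' + (1-t)\zeta''$ with $\zeta''-\zeta' \in \Lambda$. Apply Lemma \ref{lemma1} at the outermost lamination to produce a piecewise constant $u_n$ with $\mathcal A u_n = 0$, $u_n = \xi$ outside the cube, $u_n \weakst \xi$, and $u_n$ taking the values $\zeta',\zeta''$ on sets of measures tending to $t,1-t$, with the remaining ``error'' values in a small ball around $\xi$; crucially, by strong star-shapedness we may first shrink slightly toward $\xi_0$ so that these error values, and the points $\zeta',\zeta''$ themselves, lie in ${\rm int}\,\Lambda{\rm co}E$. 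Iterating this inside each region where $u_n$ equals an intermediate value $\zeta$ (rescaling the cube, using that the lamination tree has bounded depth $I_0$), after finitely many steps we obtain a piecewise constant function that equals a point of $E$ outside a set of arbitrarily small measure, takes all its values in $E \cup {\rm int}\,\Lambda{\rm co}E$, satisfies $\mathcal A(\cdot)=0$, equals $\xi$ near $\partial\Omega$, converges weak* to $\xi$, and has $\int_\Omega {\rm dist}(\cdot;E)\,dx \to 0$ since $\Lambda{\rm co}E$ is bounded. A diagonalisation over the iteration level then delivers the sequence required by Definition \ref{repo}.

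The main obstacle is the bookkeeping in this iterated lamination: one must confirm that the successive applications of Lemma \ref{lemma1} can be carried out within cubes of the partition so that the jumps occur across hyperplanes with the correct characteristic normals (Remark \ref{nopenazsaltos}), that the functions remain globally $\mathcal A$-free when glued (each sub-family is constant in a neighbourhood of the sub-cube boundary, as in Remark \ref{Rbound}), and — this is where strong star-shapedness is indispensable — that at every stage the auxiliary values $\pm c_n$ from the Lemma \ref{lemma1} construction and the intermediate lamination points can be perturbed into the open set ${\rm int}\,\Lambda{\rm co}E$ without leaving it; this is why one passes from $\xi$ to $t\xi_0+(1-t)\xi$ for $t$ small before starting, and uses that an $\varepsilon$-neighbourhood of a compact set contained in an open set is still contained in it. Once the relaxation property is in hand, Theorem \ref{aet} (in its multi-level-set form) applied with $\Omega$, $E$, $K=\Lambda{\rm co}E$, and boundary value $\xi=0 \in E \cup {\rm int}\,\Lambda{\rm co}E$ yields a dense set of solutions, completing the proof.
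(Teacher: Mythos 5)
Your proposal is correct and follows essentially the same route as the paper: reduce to the multi-level-set form of Theorem \ref{aet} (via Remark \ref{Elimitado}) with $K=\Lambda{\rm co}E$, note that each $\mathcal A$-quasiconvex $F_i$ is admissible in Definition \ref{Lambdaconv} so that $F_i\leq 0$ on $K$, and obtain the relaxation property from strong star-shapedness. The only difference is packaging: rather than re-running the iterated lamination, the paper simply invokes Theorem \ref{aproxprop} with the family $E_\delta:=\delta\xi_0+(1-\delta)E$, whose laminate hull is $\delta\xi_0+(1-\delta)\Lambda{\rm co}E\subseteq{\rm int}\,\Lambda{\rm co}E$ by star-shapedness and which approximates $E$ by compactness --- this is precisely the ``shrink towards $\xi_0$'' device you describe, so your inlined argument amounts to re-proving Theorem \ref{aproxprop} for this particular family (with the caveat that the shrinking must be applied to the set, using $\xi\in\Lambda{\rm co}E_\delta$ for $\delta$ small, rather than replacing the target $\xi$ by $t\xi_0+(1-t)\xi$, so that the weak-star limit and the boundary datum remain $\xi$).
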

\begin{proof}
This is a consequence of the generalisation of Theorem \ref{aet} (see also Remark \ref{Elimitado}) when we set 
$K = \Lambda {\rm co}E$,
provided we show that $\Lambda {\rm co}E$ has the relaxation property. This in turn will follow from 
Theorem \ref{aproxprop} and from the fact that $\Lambda {\rm co}E$ is strongly star shaped. 

Indeed, for $\delta \in (0,1]$ let
$$E_\delta = \delta \xi_0 + (1 - \delta)E.$$
An induction argument shows that 
$$K(E_\delta) = \Lambda{\rm co}E_\delta = \delta \xi_0 + (1-\delta)\Lambda {\rm co}E
\subseteq {\rm int}\Lambda {\rm co}E, \forall \delta \in (0,1].$$
\end{proof}

\bigskip
\textbf{Acknowledgments.} 
The research of A.C.B. was partially supported by the Funda\c{c}\~{a}o para a Ci\^{e}ncia e a Tecnologia through grant 
UID/MAT/04561/2013. The research of J.M. and P.S. was partially supported by the Funda\c{c}\~{a}o
para a Ci\^{e}ncia e a Tecnologia through grant UID/MAT/04459/2013.

\end{document}